\documentclass[11pt]{amsart}

\usepackage{amsmath}
\usepackage{amssymb}
\usepackage{amscd}
\usepackage{color}
\usepackage{hyperref}
\usepackage{mathrsfs}

\usepackage{xy}
\xyoption{all}

\topmargin=-1.5cm
\oddsidemargin=-1cm
\evensidemargin=-.5cm
\textwidth=17.5cm
\textheight=23.5cm

\newcommand{\nc}{\newcommand}

\nc{\rJ}{{\mathrm{J}}}


\nc{\CC}{{\mathbb{C}}}
\nc{\LL}{{\mathbb{L}}}
\nc{\RR}{{\mathbb{R}}}
\nc{\PP}{{\mathbb{P}}}
\nc{\OO}{{\mathbb{O}}}

\nc{\QQ}{{\mathbb{Q}}}
\nc{\ZZ}{{\mathbb{Z}}}


\nc{\cA}{{\mathscr{A}}}
\nc{\cB}{{\mathscr{B}}}
\nc{\cC}{{\mathscr{C}}}
\nc{\cD}{{\mathscr{D}}}
\nc{\cE}{{\mathscr{E}}}
\nc{\cF}{{\mathscr{F}}}
\nc{\cG}{{\mathscr{G}}}
\nc{\cH}{{\mathscr{H}}}
\nc{\cI}{{\mathscr{I}}}
\nc{\cJ}{{\mathscr{J}}}
\nc{\cK}{{\mathscr{K}}}
\nc{\cL}{{\mathscr{L}}}
\nc{\cM}{{\mathscr{M}}}
\nc{\cN}{{\mathscr{N}}}
\nc{\cO}{{\mathscr{O}}}
\nc{\cP}{{\mathscr{P}}}
\nc{\cQ}{{\mathscr{Q}}}
\nc{\cR}{{\mathscr{R}}}
\nc{\cS}{{\mathscr{S}}}
\nc{\cT}{{\mathscr{T}}}
\nc{\cU}{{\mathscr{U}}}
\nc{\cV}{{\mathscr{V}}}
\nc{\cW}{{\mathscr{W}}}
\nc{\cX}{{\mathscr{X}}}
\nc{\cY}{{\mathscr{Y}}}
\nc{\cZ}{{\mathscr{Z}}}


\nc{\bA}{{\mathbf{A}}}
\nc{\bB}{{\mathbf{B}}}
\nc{\bC}{{\mathbf{C}}}
\nc{\bD}{{\mathbf{D}}}
\nc{\bE}{{\mathbf{E}}}
\nc{\bF}{{\mathbf{F}}}
\nc{\bG}{{\mathbf{G}}}
\nc{\bH}{{\mathbf{H}}}
\nc{\bI}{{\mathbf{I}}}
\nc{\bJ}{{\mathbf{J}}}
\nc{\bK}{{\mathbf{K}}}
\nc{\bL}{{\mathbf{L}}}
\nc{\bM}{{\mathbf{M}}}
\nc{\bN}{{\mathbf{N}}}
\nc{\bO}{{\mathbf{O}}}
\nc{\bP}{{\mathbf{P}}}
\nc{\bQ}{{\mathbf{Q}}}
\nc{\bR}{{\mathbf{R}}}
\nc{\bS}{{\mathbf{S}}}
\nc{\bT}{{\mathbf{T}}}
\nc{\bU}{{\mathbf{U}}}
\nc{\bV}{{\mathbf{V}}}
\nc{\bW}{{\mathbf{W}}}
\nc{\bX}{{\mathbf{X}}}
\nc{\bY}{{\mathbf{Y}}}
\nc{\bZ}{{\mathbf{Z}}}


\nc{\ba}{{\mathbf{a}}}
\nc{\bb}{{\mathbf{b}}}
\nc{\bc}{{\mathbf{c}}}
\nc{\bd}{{\mathbf{d}}}
\nc{\be}{{\mathbf{e}}}
\nc{\bg}{{\mathbf{g}}}
\nc{\bh}{{\mathbf{h}}}
\nc{\bi}{{\mathbf{i}}}
\nc{\bj}{{\mathbf{j}}}
\nc{\bk}{{\mathbf{k}}}
\nc{\bl}{{\mathbf{l}}}
\nc{\bm}{{\mathbf{m}}}
\nc{\bn}{{\mathbf{n}}}
\nc{\bo}{{\mathbf{o}}}
\nc{\bp}{{\mathbf{p}}}
\nc{\bq}{{\mathbf{q}}}
\nc{\br}{{\mathbf{r}}}
\nc{\bs}{{\mathbf{s}}}
\nc{\bt}{{\mathbf{t}}}
\nc{\bu}{{\mathbf{u}}}
\nc{\bv}{{\mathbf{v}}}
\nc{\bw}{{\mathbf{w}}}
\nc{\bx}{{\mathbf{x}}}
\nc{\by}{{\mathbf{y}}}
\nc{\bz}{{\mathbf{z}}}


\nc{\fA}{{\mathfrak{A}}}
\nc{\fB}{{\mathfrak{B}}}
\nc{\fC}{{\mathfrak{C}}}
\nc{\fD}{{\mathfrak{D}}}
\nc{\fE}{{\mathfrak{E}}}
\nc{\fF}{{\mathfrak{F}}}
\nc{\fG}{{\mathfrak{G}}}
\nc{\fH}{{\mathfrak{H}}}
\nc{\fI}{{\mathfrak{I}}}
\nc{\fJ}{{\mathfrak{J}}}
\nc{\fK}{{\mathfrak{K}}}
\nc{\fL}{{\mathfrak{L}}}
\nc{\fM}{{\mathfrak{M}}}
\nc{\fN}{{\mathfrak{N}}}
\nc{\fO}{{\mathfrak{O}}}
\nc{\fP}{{\mathfrak{P}}}
\nc{\fQ}{{\mathfrak{Q}}}
\nc{\fR}{{\mathfrak{R}}}
\nc{\fS}{{\mathfrak{S}}}
\nc{\fT}{{\mathfrak{T}}}
\nc{\fU}{{\mathfrak{U}}}
\nc{\fV}{{\mathfrak{V}}}
\nc{\fW}{{\mathfrak{W}}}
\nc{\fX}{{\mathfrak{X}}}
\nc{\fY}{{\mathfrak{Y}}}
\nc{\fZ}{{\mathfrak{Z}}}


\nc{\fa}{{\mathfrak{a}}}
\nc{\fb}{{\mathfrak{b}}}
\nc{\fc}{{\mathfrak{c}}}
\nc{\fd}{{\mathfrak{d}}}
\nc{\fe}{{\mathfrak{e}}}
\nc{\ff}{{\mathfrak{f}}}
\nc{\fg}{{\mathfrak{g}}}
\nc{\fh}{{\mathfrak{h}}}
\nc{\fj}{{\mathfrak{j}}}
\nc{\fk}{{\mathfrak{k}}}
\nc{\fl}{{\mathfrak{l}}}
\nc{\fm}{{\mathfrak{m}}}
\nc{\fn}{{\mathfrak{n}}}
\nc{\fo}{{\mathfrak{o}}}
\nc{\fp}{{\mathfrak{p}}}
\nc{\fq}{{\mathfrak{q}}}
\nc{\fr}{{\mathfrak{r}}}
\nc{\fs}{{\mathfrak{s}}}
\nc{\ft}{{\mathfrak{t}}}
\nc{\fu}{{\mathfrak{u}}}
\nc{\fv}{{\mathfrak{v}}}
\nc{\fw}{{\mathfrak{w}}}
\nc{\fx}{{\mathfrak{x}}}
\nc{\fy}{{\mathfrak{y}}}
\nc{\fz}{{\mathfrak{z}}}


\nc{\sA}{{\mathsf{A}}}
\nc{\sB}{{\mathsf{B}}}
\nc{\sC}{{\mathsf{C}}}
\nc{\sD}{{\mathsf{D}}}
\nc{\sE}{{\mathsf{E}}}
\nc{\sF}{{\mathsf{F}}}
\nc{\sG}{{\mathsf{G}}}
\nc{\sH}{{\mathsf{H}}}
\nc{\sI}{{\mathsf{I}}}
\nc{\sJ}{{\mathsf{J}}}
\nc{\sK}{{\mathsf{K}}}
\nc{\sL}{{\mathsf{L}}}
\nc{\sM}{{\mathsf{M}}}
\nc{\sN}{{\mathsf{N}}}
\nc{\sO}{{\mathsf{O}}}
\nc{\sP}{{\mathsf{P}}}
\nc{\sQ}{{\mathsf{Q}}}
\nc{\sR}{{\mathsf{R}}}
\nc{\sS}{{\mathsf{S}}}
\nc{\sT}{{\mathsf{T}}}
\nc{\sU}{{\mathsf{U}}}
\nc{\sV}{{\mathsf{V}}}
\nc{\sW}{{\mathsf{W}}}
\nc{\sX}{{\mathsf{X}}}
\nc{\sY}{{\mathsf{Y}}}
\nc{\sZ}{{\mathsf{Z}}}


\nc{\sa}{{\mathsf{a}}}
\nc{\sd}{{\mathsf{d}}}
\nc{\se}{{\mathsf{e}}}
\nc{\sg}{{\mathsf{g}}}
\nc{\sh}{{\mathsf{h}}}
\nc{\si}{{\mathsf{i}}}
\nc{\sj}{{\mathsf{j}}}
\nc{\sk}{{\mathsf{k}}}
\nc{\sm}{{\mathsf{m}}}
\nc{\sn}{{\mathsf{n}}}
\nc{\so}{{\mathsf{o}}}
\nc{\sq}{{\mathsf{q}}}
\nc{\sr}{{\mathsf{r}}}
\nc{\st}{{\mathsf{t}}}
\nc{\su}{{\mathsf{u}}}
\nc{\sv}{{\mathsf{v}}}
\nc{\sw}{{\mathsf{w}}}
\nc{\sx}{{\mathsf{x}}}
\nc{\sy}{{\mathsf{y}}}
\nc{\sz}{{\mathsf{z}}}


\nc{\oA}{{\overline{A}}}
\nc{\oB}{{\overline{B}}}
\nc{\oC}{{\overline{C}}}
\nc{\oD}{{\overline{D}}}
\nc{\oE}{{\overline{E}}}
\nc{\oF}{{\overline{F}}}
\nc{\oG}{{\overline{G}}}
\nc{\oH}{{\overline{H}}}
\nc{\oI}{{\overline{I}}}
\nc{\oJ}{{\overline{J}}}
\nc{\oK}{{\overline{K}}}
\nc{\oL}{{\overline{L}}}
\nc{\oM}{{\overline{M}}}
\nc{\oN}{{\overline{N}}}
\nc{\oO}{{\overline{O}}}
\nc{\oP}{{\overline{P}}}
\nc{\oQ}{{\overline{Q}}}
\nc{\oR}{{\overline{R}}}
\nc{\oS}{{\overline{S}}}
\nc{\oT}{{\overline{T}}}
\nc{\oU}{{\overline{U}}}
\nc{\oV}{{\overline{V}}}
\nc{\oW}{{\overline{W}}}
\nc{\oX}{{\overline{X}}}
\nc{\oY}{{\overline{Y}}}
\nc{\oZ}{{\overline{Z}}}


\nc{\oa}{{\overline{a}}}
\nc{\ob}{{\overline{b}}}
\nc{\oc}{{\overline{c}}}
\nc{\od}{{\overline{d}}}
\nc{\of}{{\overline{f}}}
\nc{\og}{{\overline{g}}}
\nc{\oh}{{\overline{h}}}
\nc{\oi}{{\overline{i}}}
\nc{\oj}{{\overline{j}}}
\nc{\ok}{{\overline{k}}}
\nc{\ol}{{\overline{l}}}
\nc{\om}{{\overline{m}}}
\nc{\on}{{\overline{n}}}
\nc{\oo}{{\overline{o}}}
\nc{\op}{{\overline{p}}}
\nc{\oq}{{\overline{q}}}
\nc{\os}{{\overline{s}}}
\nc{\ot}{{\overline{t}}}
\nc{\ou}{{\overline{u}}}
\nc{\ov}{{\overline{v}}}
\nc{\ow}{{\overline{w}}}
\nc{\ox}{{\overline{x}}}
\nc{\oy}{{\overline{y}}}
\nc{\oz}{{\overline{z}}}


\nc{\tA}{{\tilde{A}}}
\nc{\tB}{{\tilde{B}}}
\nc{\tC}{{\tilde{C}}}
\nc{\tD}{{\tilde{D}}}
\nc{\tE}{{\tilde{E}}}
\nc{\tF}{{\tilde{F}}}
\nc{\tG}{{\tilde{G}}}
\nc{\tH}{{\tilde{H}}}
\nc{\tI}{{\tilde{I}}}
\nc{\tJ}{{\tilde{J}}}
\nc{\tK}{{\tilde{K}}}
\nc{\tL}{{\tilde{L}}}
\nc{\tM}{{\tilde{M}}}
\nc{\tN}{{\tilde{N}}}
\nc{\tO}{{\tilde{O}}}
\nc{\tP}{{\tilde{P}}}
\nc{\tQ}{{\tilde{Q}}}
\nc{\tR}{{\tilde{R}}}
\nc{\tS}{{\tilde{S}}}
\nc{\tT}{{\tilde{T}}}
\nc{\tU}{{\tilde{U}}}
\nc{\tV}{{\tilde{V}}}
\nc{\tW}{{\tilde{W}}}
\nc{\tX}{{\tilde{X}}}
\nc{\tY}{{\tilde{Y}}}
\nc{\tZ}{{\tilde{Z}}}


\nc{\ta}{{\tilde{a}}}
\nc{\tb}{{\tilde{b}}}
\nc{\tc}{{\tilde{c}}}
\nc{\td}{{\tilde{d}}}
\nc{\te}{{\tilde{e}}}
\nc{\tf}{{\tilde{f}}}
\nc{\tg}{{\tilde{g}}}
\nc{\ti}{{\tilde{i}}}
\nc{\tj}{{\tilde{j}}}
\nc{\tk}{{\tilde{k}}}
\nc{\tl}{{\tilde{l}}}
\nc{\tm}{{\tilde{m}}}
\nc{\tn}{{\tilde{n}}}
\nc{\tp}{{\tilde{p}}}
\nc{\tq}{{\tilde{q}}}
\nc{\tr}{{\tilde{r}}}
\nc{\ts}{{\tilde{s}}}
\nc{\tu}{{\tilde{u}}}
\nc{\tv}{{\tilde{v}}}
\nc{\tw}{{\tilde{w}}}
\nc{\tx}{{\tilde{x}}}
\nc{\ty}{{\tilde{y}}}
\nc{\tz}{{\tilde{z}}}


\nc{\hA}{{\hat{A}}}
\nc{\hB}{{\hat{B}}}
\nc{\hC}{{\hat{C}}}
\nc{\hD}{{\hat{D}}}
\nc{\hE}{{\hat{E}}}
\nc{\hF}{{\hat{F}}}
\nc{\hG}{{\hat{G}}}
\nc{\hH}{{\hat{H}}}
\nc{\hI}{{\hat{I}}}
\nc{\hJ}{{\hat{J}}}
\nc{\hK}{{\hat{K}}}
\nc{\hL}{{\hat{L}}}
\nc{\hM}{{\hat{M}}}
\nc{\hN}{{\hat{N}}}
\nc{\hO}{{\hat{O}}}
\nc{\hP}{{\hat{P}}}
\nc{\hQ}{{\hat{Q}}}
\nc{\hR}{{\hat{R}}}
\nc{\hS}{{\hat{S}}}
\nc{\hT}{{\hat{T}}}
\nc{\hU}{{\hat{U}}}
\nc{\hV}{{\hat{V}}}
\nc{\hW}{{\hat{W}}}
\nc{\hX}{{\hat{X}}}
\nc{\hY}{{\hat{Y}}}
\nc{\hZ}{{\hat{Z}}}


\nc{\ha}{{\hat{a}}}
\nc{\hb}{{\hat{b}}}
\nc{\hc}{{\hat{c}}}
\nc{\hd}{{\hat{d}}}
\nc{\he}{{\hat{e}}}
\nc{\hf}{{\hat{f}}}
\nc{\hg}{{\hat{g}}}
\nc{\hh}{{\hat{h}}}
\nc{\hi}{{\hat{i}}}
\nc{\hj}{{\hat{j}}}
\nc{\hk}{{\hat{k}}}
\nc{\hl}{{\hat{l}}}
\nc{\hm}{{\hat{m}}}
\nc{\hn}{{\hat{n}}}
\nc{\ho}{{\hat{o}}}
\nc{\hp}{{\hat{p}}}
\nc{\hq}{{\hat{q}}}
\nc{\hr}{{\hat{r}}}
\nc{\hs}{{\hat{s}}}
\nc{\hu}{{\hat{u}}}
\nc{\hv}{{\hat{v}}}
\nc{\hw}{{\hat{w}}}
\nc{\hx}{{\hat{x}}}
\nc{\hy}{{\hat{y}}}
\nc{\hz}{{\hat{z}}}


\nc{\eps}{\varepsilon}
\nc{\lan}{\big\langle}
\nc{\ran}{\big\rangle}
\nc{\kk}{{\mathsf{k}}}

\def\bw#1#2{\textstyle{\bigwedge\hskip-0.9mm^{#1}}\hskip0.2mm{#2}}


\DeclareMathOperator{\Bl}{\mathrm{Bl}}

\nc{\Sym}{\mathrm{S}}

\DeclareMathOperator{\Gr}{\mathrm{Gr}}


\theoremstyle{plain}

\newtheorem{theorem}[equation]{Theorem}

\newtheorem{lemma}[equation]{Lemma}

\theoremstyle{definition}

\theoremstyle{remark}

\newtheorem{remark}[equation]{Remark}


\title[Derived category of an Enriques surface in derived category of a Fano variety]%
{Embedding derived categories of Enriques surfaces\\[1ex]into derived categories of Fano varieties}
\author{Alexander Kuznetsov}
\address{{\sloppy
\parbox{0.9\textwidth}{
Algebraic Geometry Section, Steklov Mathematical Institute of Russian Academy of Sciences,\\
8 Gubkin str., Moscow 119991 Russia
\\[5pt]
The Poncelet Laboratory, Independent University of Moscow
\hfill\\[5pt]
Laboratory of Algebraic Geometry, National Research University Higher School of Economics, Russian Federation
}\bigskip}}
\email{akuznet@mi-ras.ru}
\date{}
\thanks{I was partially supported by the Russian Academic Excellence Project ``5-100''
and by the Program of the Presidium of the Russian Academy of Sciences~01 ``Fundamental Mathematics and
its Applications'' under grant PRAS-18-01.
I am grateful to A.~Fonarev, D.~Orlov and~C.~Shramov for inspiring discussions and the referee for useful comments.}

\begin{document}

\begin{abstract}
We show that the bounded derived category of coherent sheaves on a general Enriques surface can be realized
as a semiorthogonal component in the derived category of a smooth Fano variety
with diagonal Hodge diamond.
\end{abstract}

\maketitle

If a smooth projective variety $X$ over the field $\CC$ of complex numbers
has a full exceptional collection, then its Hodge diamond is \emph{diagonal}, i.e.,
\begin{equation*}
h^{p,q}(X) = 0
\qquad
\text{for $p \ne q$}.
\end{equation*}
It is natural to ask whether the converse is true.
A simple counterexample to this naive question is provided by an Enriques surface $S$ ---
its Hodge diamond looks like
\begin{equation*}
\begin{smallmatrix}
&& 1 \\
& 0 && 0 \\
0 && 10 && 0\\
& 0 && 0 \\
&& 1 
\end{smallmatrix},
\end{equation*}
so it is diagonal; on the other hand, its Grothendieck group $K_0(S)$ contains a 2-torsion class (see, for instance, \cite[Lemma~2.2]{GKMS}),
hence the derived category cannot be generated by a full exceptional collection by the next simple lemma.

\begin{lemma}[{cf.~\cite[\S3]{BP93}, \cite[Proposition~2.1(5)]{GKMS}}]
\label{lemma:no-ec}
Let $\cT$ be a triangulated category such that the Grothendieck group $K_0(\cT)$ contains a torsion class. 
Then $\cT$ does not admit a full exceptional collection.
\end{lemma}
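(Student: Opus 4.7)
The plan is to show directly that if $\cT$ admits a full exceptional collection $E_1, \dots, E_n$, then the Grothendieck group $K_0(\cT)$ is free abelian of rank $n$; this contradicts the existence of torsion classes.

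First I would establish that the classes $[E_1], \dots, [E_n]$ generate $K_0(\cT)$. Fullness means $\cT$ is generated, as a triangulated category, by the $E_i$; equivalently, iterated mutations/cone decompositions associated with the semiorthogonal decomposition $\cT = \langle E_1, \dots, E_n\rangle$ express every object of $\cT$ as a successive extension of shifts of the $E_i$. Passing to classes in $K_0$, every element of $K_0(\cT)$ is a $\ZZ$-linear combination of the $[E_i]$, so the map $\ZZ^n \to K_0(\cT)$ sending the $i$-th basis vector to $[E_i]$ is surjective.

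Next I would use the Euler form to prove linear independence. Define
\begin{equation*}
\chi(A, B) \;=\; \sum_{k \in \ZZ} (-1)^k \dim_\kk \Hom_\cT(A, B[k]),
\end{equation*}
which descends to a bilinear pairing on $K_0(\cT)$ whenever the sum is finite. The exceptional conditions give $\chi(E_i, E_i) = 1$ and $\chi(E_j, E_i) = 0$ for $j > i$. Hence the Gram matrix of $\chi$ in the generators $[E_i]$ is upper triangular with $1$'s on the diagonal, in particular unimodular over $\ZZ$. This forces the $[E_i]$ to be $\ZZ$-linearly independent: a nontrivial relation $\sum a_i [E_i] = 0$ would be detected by pairing with suitable $[E_j]$, contradicting the nondegeneracy of the matrix.

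Combining the two steps, $K_0(\cT) \cong \ZZ^n$ is torsion-free, contradicting the hypothesis that $K_0(\cT)$ contains a torsion class. The only potentially subtle point is the well-definedness of $\chi$ on the classes $[E_i]$, which is immediate because each $\Hom^\bullet(E_i, E_j)$ is finite-dimensional (by exceptionality and semiorthogonality); no global finiteness assumption on $\cT$ is needed for this argument.
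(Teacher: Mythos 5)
Your proof is correct and is essentially the paper's argument: both reduce to showing that a full exceptional collection of length $n$ forces $K_0(\cT)\cong\ZZ^n$, which is torsion-free. The only difference is that the paper simply cites additivity of $K_0$ under semiorthogonal decompositions, whereas you prove that fact directly (generation from the cone decompositions, independence from the unitriangular Euler pairing), which is a standard and valid unpacking of the same step.
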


\begin{proof}
Assume $\cT$ is generated by an exceptional collection of length~$n$.
Since the Grothendieck group is additive with respect to semiorthogonal decompositions, we have $K_0(\cT) \cong \ZZ^n$.
In particular, $K_0(\cT)$ is torsion free.
\end{proof}

The question that is a bit less naive --- whether a \emph{Fano variety} with diagonal Hodge diamond
necessarily has a full exceptional collection --- was asked by Alexey Bondal back in 1989.
This question was raised again in a recent paper~\cite{PS}.
The main goal of this note is to show that the answer is still negative, 
and again counterexamples can be constructed using Enriques surfaces.

To be more precise, we construct a smooth Fano variety $X$ such that its bounded derived category $\bD(X)$ of coherent sheaves
has a semiorthogonal decomposition whose components are several exceptional objects and $\bD(S)$, where $S$ is an Enriques surface. 
Thus, the Hodge diamond of $X$ is diagonal, but the Grothendieck group $K_0(X)$ contains a 2-torsion class (coming from $K_0(S)$),
hence~$\bD(X)$ does not have a full exceptional collection by Lemma~\ref{lemma:no-ec}.

In fact, we present two such constructions.

\bigskip

In the first construction, $S$ is a general Enriques surface from a certain divisorial
family in the moduli space of Enriques surfaces --- such~$S$ are called ``nodal Enriques surfaces'' or ``Reye congruences''.
By~\cite[Theorem~3.2.2]{C83} an Enriques surface $S$ of this type can be embedded into the Grassmannian~$\Gr(2,4)$,
and~\cite[Lemma~5.1]{IK} describes a resolution of its structure sheaf.

We consider the blowup
\begin{equation*}
M = \Bl_S(\Gr(2,4)).
\end{equation*}

\begin{theorem}
\label{theorem:reye}
The variety $M$ is a Fano $4$-fold with a semiorthogonal decomposition 
\begin{equation*}
\bD(M) = \langle \bD(S), E_1, \dots, E_6 \rangle,
\end{equation*}
where $E_1,\dots,E_6$ are exceptional bundles.
The Hodge diamond of $M$ is diagonal, but $K_0(M)$ contains a~$2$-torsion class;
in particular $\bD(M)$ does not have a full exceptional collection.
\end{theorem}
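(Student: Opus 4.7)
My plan is to combine Orlov's blowup formula with Kapranov's exceptional collection on the Grassmannian; the only nontrivial geometric input is the verification that~$M$ is Fano, for which I would use the explicit resolution of~$\cO_S$ recorded in \cite[Lemma~5.1]{IK}.

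First, I would produce the semiorthogonal decomposition. Since $S$ has codimension $c = 2$ in $\Gr(2,4)$, Orlov's blowup formula yields
$$\bD(M) = \langle \Phi(\bD(S)),\, \pi^*\bD(\Gr(2,4)) \rangle,$$
with $\Phi \colon \bD(S) \to \bD(M)$ the fully faithful functor $F \mapsto j_*(p^* F \otimes \cO_E(-1))$, where $j \colon E \hookrightarrow M$ is the inclusion of the exceptional divisor and $p \colon E \to S$ is the projection of the projectivized normal bundle (the number of copies of $\bD(S)$ appearing is $c-1 = 1$). Inserting Kapranov's full exceptional collection on $\Gr(2,4)$, of length $\chi(\Gr(2,4)) = 6$ and consisting of the bundles $\Sigma^\lambda \cU^\vee$ for partitions $\lambda \subseteq (2,2)$, produces the desired decomposition with $E_i := \pi^* \Sigma^{\lambda_i}\cU^\vee$.

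Second, I would check that $M$ is Fano. The blowup adjunction formula gives $K_M = \pi^* K_{\Gr(2,4)} + E$, hence
$$-K_M = 4\pi^*H - E,$$
where $H$ denotes the Plücker hyperplane class on $\Gr(2,4)$. Ampleness can be tested on extremal curves: a fibre $f$ of $p$ satisfies $-K_M \cdot f = 1 > 0$, whereas for the strict transform $\tilde C$ of a curve $C \subset \Gr(2,4)$ meeting $S$ transversely one finds $-K_M \cdot \tilde C = 4(H \cdot C) - \#(C \cap S)$. Positivity of the latter for all such $C$ amounts to a uniform bound on $\#(C \cap S)$ in terms of $H \cdot C$, which I would extract from the explicit locally free resolution of $\cO_S$ in \cite[Lemma~5.1]{IK}; this Fano verification is the main technical obstacle.

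Third, the remaining assertions are essentially formal. The blowup formula for Hodge numbers
$$h^{p,q}(M) = h^{p,q}(\Gr(2,4)) + h^{p-1,q-1}(S)$$
gives diagonality, since both $\Gr(2,4)$ (which has a cellular decomposition) and the Enriques surface~$S$ (tabulated in the introduction) have diagonal Hodge diamonds. Additivity of $K_0$ along semiorthogonal decompositions yields $K_0(M) \cong K_0(S) \oplus \ZZ^6$, so the $2$-torsion of $K_0(S)$ from \cite[Lemma~2.2]{GKMS} lifts to $K_0(M)$, and Lemma~\ref{lemma:no-ec} precludes the existence of a full exceptional collection on $\bD(M)$.
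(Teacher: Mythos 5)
Your semiorthogonal decomposition, Hodge diamond computation, and $K_0$ argument all match the paper's proof. The genuine gap is in the Fano verification, which you correctly identify as the crux but do not actually carry out. Two problems arise. First, the uniform bound $\#(C\cap S) < 4(H\cdot C)$ for every curve $C\subset\Gr(2,4)$ meeting $S$ properly is a real claim that needs proof; it does not follow just by ``extracting'' it from the resolution of $\cO_S$ without an argument (one would need something like global generation of $\cI_S(3H)$, which itself requires justification). Second, even granting such a bound, positivity of $-K_M$ on irreducible curves does not by itself imply ampleness: Kleiman's criterion requires strict positivity on the closed cone of curves (limits of curve classes can have intersection number tending to zero), and Nakai--Moishezon requires positivity on subvarieties of all dimensions. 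So your outline, as it stands, does not establish that $M$ is Fano. You would also need to handle curves contained in the exceptional divisor beyond the fibres of $p$.

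The paper sidesteps all of this: by \cite[Lemmas~5.2, 5.3]{IK} the blowup $M$ embeds into $\Gr(2,4)\times\PP^3$ as the zero locus of a regular section of the rank-$3$ bundle $\Sym^2\cU^\vee\boxtimes\cO(1)$, whose determinant is $\cO(3)\boxtimes\cO(3)$; adjunction then gives $\omega_M^{-1}\cong(\cO(1)\boxtimes\cO(1))\vert_M$, the restriction of an ample line bundle, so $M$ is Fano with no curve-by-curve analysis. If you want to salvage your route, you should either prove the global generation statement and then invoke the Cone Theorem to reduce ampleness to positivity on extremal rational curves, or simply replace the second step by the zero-locus description and adjunction as in the paper.
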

\begin{proof}
By~\cite[Lemma~5.2, 5.3]{IK} the variety $M$ can be embedded into the product $\Gr(2,4) \times \PP^3$
as the zero locus of a regular section of the rank-3 vector bundle $\Sym^2\cU^\vee \boxtimes \cO(1)$, where $\cU$
is the tautological vector bundle of the Grassmannian. 
The determinant of this vector bundle is isomorphic to $\cO(3) \boxtimes \cO(3)$, 
hence by adjunction formula $\omega_M^{-1} \cong (\cO(1) \boxtimes \cO(1))\vert_M$ is the restriction
of an ample line bundle, hence $M$ is a Fano 4-fold.

The semiorthogonal decomposition is given by the Orlov's blowup formula and the fact that $\bD(\Gr(2,4))$
is generated by an exceptional collection of length 6.
The Hodge diamond of $M$ looks like
\begin{equation*}
\begin{smallmatrix}
&&&& 1 \\
&&& 0 && 0 \\
&& 0 && 1 && 0 \\
& 0 && 0 && 0 && 0 \\
0 && 0 && 2 && 0 && 0\\
& 0 && 0 && 0 && 0 \\
&& 0 && 1 && 0 \\
&&& 0 && 0 \\
&&&& 1 
\end{smallmatrix}
\ +\ 
\begin{smallmatrix}
&& 1 \\
& 0 && 0 \\
0 && 10 && 0\\
& 0 && 0 \\
&& 1 
\end{smallmatrix}
\ =\ 
\begin{smallmatrix}
&&&& 1 \\
&&& 0 && 0 \\
&& 0 && 2 && 0 \\
& 0 && 0 && 0 && 0 \\
0 && 0 && 12 && 0 && 0\\
& 0 && 0 && 0 && 0 \\
&& 0 && 2 && 0 \\
&&& 0 && 0 \\
&&&& 1 
\end{smallmatrix},
\end{equation*}
a combination of the Hodge diamonds of $\Gr(2,4)$ and $S$, again thanks to the blowup representation.
The Grothendieck group is additive with respect to semiorthogonal decompositions, hence
\begin{equation*}
K_0(M) = K_0(S) \oplus \ZZ^6;
\end{equation*}
in particular the 2-torsion class in $S$ gives a 2-torsion class in $M$.
We conclude by Lemma~\ref{lemma:no-ec}.
\end{proof}

\bigskip

The second construction works for a general Enriques surface 
(i.e., corresponding to any point of an open subset in the moduli space of Enriques surfaces),
at the price that the corresponding Fano variety is 6-dimensional.

Let $V_1$ and $V_2$ be a pair of 3-dimensional vector spaces.
Consider the Veronese embeddings
\begin{equation*}
\PP(V_1) \hookrightarrow \PP(\Sym^2V_1) \hookrightarrow \PP(\Sym^2V_1 \oplus \Sym^2V_2),
\qquad
\PP(V_2) \hookrightarrow \PP(\Sym^2V_2) \hookrightarrow \PP(\Sym^2V_1 \oplus \Sym^2V_2),
\end{equation*}
and their join $\rJ(\PP(V_1),\PP(V_2)) \subset \PP(\Sym^2V_1 \oplus \Sym^2V_2)$.
This is a singular 5-dimensional variety, whose singularities are resolved by the projective bundle
\begin{equation*}
\bJ := \PP_{\PP(V_1) \times \PP(V_2)}(\cO(-2,0) \oplus \cO(0,-2)).
\end{equation*}
Indeed, denote by~$H_1$ and $H_2$ the pullbacks to $\bJ$ of the hyperplane classes of the two factors~$\PP(V_1)$ and~$\PP(V_2)$,
by~$H$ the Grothendieck relative class of the projectivization, and by $\pi \colon \bJ \to \PP(V_1) \times \PP(V_2)$ the projection.
Then the natural embedding
\begin{equation*}
\cO_\bJ(-H) \hookrightarrow \pi^*(\cO(-2,0) \oplus \cO(0,-2)) \hookrightarrow (\Sym^2V_1 \otimes \cO) \oplus (\Sym^2V_2 \otimes \cO)
\end{equation*}
defines a morphism $\bJ \to \PP(\Sym^2V_1 \oplus \Sym^2V_2)$ which contracts the divisors 
\begin{equation*}
\PP_{\PP(V_1) \times \PP(V_2)}(\cO(-2,0)) \subset \bJ,
\qquad
\PP_{\PP(V_1) \times \PP(V_2)}(\cO(0,-2)) \subset \bJ
\end{equation*}
onto the two Veronese surfaces 
$\PP(V_1) \hookrightarrow \PP(\Sym^2V_1 \oplus \Sym^2V_2)$ and $\PP(V_2) \hookrightarrow \PP(\Sym^2V_1 \oplus \Sym^2V_2)$,
and takes the fibers of $\pi$ to the lines joining the corresponding points of these.

Below we consider a global section of the vector bundle $\cO_\bJ(H)^{\oplus 3}$ on $\bJ$.
Note that 
\begin{equation*}
H^0(\bJ, \cO_\bJ(H)) \cong H^0(\PP(V_1) \times \PP(V_2), \cO(2,0) \oplus \cO(0,2)) \cong \Sym^2V_1^\vee \oplus \Sym^2V_2^\vee,
\end{equation*}
so such a section is given by a linear map 
\begin{equation*}
\phi \colon W \to \Sym^2V_1^\vee \oplus \Sym^2V_2^\vee
\end{equation*}
from a 3-dimensional vector space $W$.
We will denote the corresponding section also by $\phi$.

\begin{lemma}
The zero locus $S \subset \bJ$ of a general section $\phi$ of the vector bundle $\cO_\bJ(H)^{\oplus 3}$ on $\bJ$ is an Enriques surface.
A general Enriques surface can be obtained in this way.
\end{lemma}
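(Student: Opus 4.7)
The plan is to verify the standard characterization of an Enriques surface on $S$: smooth of dimension $2$, with $\omega_S^{\otimes 2} \cong \cO_S$ but $\omega_S \not\cong \cO_S$, and with $\chi(\cO_S) = 1$, $h^1(\cO_S) = 0$. Smoothness of the correct dimension follows from Bertini applied to the globally generated vector bundle $\cO_\bJ(H)^{\oplus 3}$. The canonical bundle is computed by adjunction combined with the projective-bundle formula $\omega_{\bJ/\PP(V_1)\times\PP(V_2)} \cong \cO_\bJ(-2H + 2H_1 + 2H_2)$, giving $\omega_\bJ \cong \cO_\bJ(-2H - H_1 - H_2)$ and hence
\[
\omega_S \cong \cO_\bJ(H - H_1 - H_2)|_S.
\]

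The key geometric observation for $\omega_S^{\otimes 2} \cong \cO_S$ is that the two sections of the $\PP^1$-bundle $\pi$ already highlighted in the excerpt,
\[
D_1 := \PP_{\PP(V_1)\times\PP(V_2)}(\cO(-2,0)), \qquad D_2 := \PP_{\PP(V_1)\times\PP(V_2)}(\cO(0,-2)),
\]
have divisor classes $[D_1] = H - 2H_2$ and $[D_2] = H - 2H_1$ (computable e.g.\ from $\pi_*[D_i] = 1$ together with the adjunction formula for a section of a projectivized bundle). Thus $D_1 + D_2 \sim 2(H - H_1 - H_2)$, so $\omega_S^{\otimes 2}$ is the restriction of $\cO_\bJ(D_1 + D_2)$, whose tautological section vanishes along $D_1 \cup D_2$. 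It suffices to check that $S$ avoids $D_1$ and $D_2$ for a generic $\phi$. Since $\cO_\bJ(H)|_{D_1} = \cO(2,0)$ on $D_1 \cong \PP(V_1)\times\PP(V_2)$, the restriction $\phi|_{D_1}$ is encoded by the first component $\phi_1 \colon W \to \Sym^2 V_1^\vee$ of $\phi$, whose common zero locus in $\PP(V_1) = \PP^2$ is the base locus of a general net of conics and therefore empty; the symmetric argument handles $D_2$.

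The remaining cohomological conditions $\chi(\cO_S) = 1$, $h^1(\cO_S) = 0$ and $h^0(\omega_S) = 0$ (the latter forcing $\omega_S \not\cong \cO_S$) all follow from the Koszul resolution
\[
0 \to \cO_\bJ(-3H) \to \cO_\bJ(-2H)^{\oplus 3} \to \cO_\bJ(-H)^{\oplus 3} \to \cO_\bJ \to \cO_S \to 0
\]
twisted successively by $\cO_\bJ$ and by $\cO_\bJ(H - H_1 - H_2)$. Applying $\pi_*$ and the standard $\PP^1$-bundle pushforward formulas $\pi_*\cO_\bJ(aH) = \Sym^a(\cO(2,0)\oplus\cO(0,2))$ for $a \ge 0$ and $R^1\pi_*\cO_\bJ(aH) = \Sym^{-a-2}(\cO(-2,0)\oplus\cO(0,-2)) \otimes \cO(-2,-2)$ for $a \le -2$, every term of the hypercohomology spectral sequence vanishes by Künneth on $\PP^2 \times \PP^2$ except for one surviving $\CC$ that produces the expected Euler characteristic in each case. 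This identifies $S$ as an Enriques surface.

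For the final sentence, that a general Enriques surface is so obtained, the natural strategy is a dimension count: $S$ depends only on the $3$-dimensional subspace $\phi(W) \subset U := \Sym^2 V_1^\vee \oplus \Sym^2 V_2^\vee$, i.e.\ on a point of $\Gr(3,U)$ of dimension $27$, quotiented by the $17$-dimensional effective symmetry group $\PGL(V_1) \times \PGL(V_2) \times \CC^*$ of the pair $(\bJ, \cO_\bJ(H))$. This yields a $10$-parameter family, matching $\dim \cM_{\mathrm{Enr}} = 10$, so by irreducibility of the Enriques moduli space dominance follows from generic finiteness of the classifying map. Verifying that generic finiteness is, in my view, the main obstacle; the cleanest way around it is probably to exhibit a single $S$ from the family and identify it with a classical model (e.g.\ through its K3 double cover), which then upgrades the dimension match to an actual dominance statement.
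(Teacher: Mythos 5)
Your treatment of the first assertion is correct but follows a genuinely different route from the paper. The paper passes to the double cover: it realizes $\bJ$ as the quotient of $\widetilde\bJ=\PP_{\PP(V_1)\times\PP(V_2)}(\cO(-1,0)\oplus\cO(0,-1))$ (the blowup of $\PP(V_1\oplus V_2)$ along two skew planes) by an involution $\tau$ with fixed locus the two exceptional divisors, checks that the preimage $\tS=f^{-1}(S)$ is a K3 surface disjoint from the fixed locus, and concludes that $S=\tS/\tau$ is Enriques. You instead verify the numerical characterization directly on $S$: your computations of $\omega_\bJ$, of the classes $[D_1]=H-2H_2$, $[D_2]=H-2H_1$, the identity $D_1+D_2\sim 2(H-H_1-H_2)$, the emptiness of $S\cap D_i$ for general $\phi$ (the same ``three conics in $\PP(V_i)$'' argument the paper uses upstairs for $\tS\cap E_i$), and the Koszul computation of $h^i(\cO_S)$ all check out, and $2K_S\sim 0$, $K_S\not\sim 0$, $q=0$ does characterize Enriques surfaces. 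Your version is more self-contained at this stage, at the cost of losing the K3 cover as an explicit object.

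The genuine gap is in the second assertion, and you have flagged it yourself: the dimension count $\dim\Gr(3,12)-\dim(\PGL(V_1)\times\PGL(V_2)\times\CC^*)=27-17=10$ shows only that the family has the right number of moduli, not that the classifying map to the $10$-dimensional Enriques moduli space is dominant (its image could a priori be lower-dimensional with positive-dimensional fibers). The paper closes this by exactly the route you gesture at but do not execute: it identifies the K3 cover $\tS$ --- a $\tau$-invariant complete intersection of three quadrics in $\PP(V_1\oplus V_2)=\PP^5$ --- with the surface of \cite[Exercise~VIII.18]{B-book}, i.e.\ with the classical Enriques construction, for which it is known that the quotient is a general Enriques surface. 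This is also where your decision to avoid the double cover costs you: without $\tS\subset\widetilde\bJ\to\PP(V_1\oplus V_2)$ in hand, there is no natural bridge to that classical model, so to complete the proof along your lines you would still need to reconstruct the cover (or prove generic finiteness of the classifying map by some other means, e.g.\ a Torelli-type or automorphism argument), neither of which is done in the proposal.
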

\begin{proof}
Consider another projective bundle
\begin{equation*}
\widetilde\bJ \cong \PP_{\PP(V_1) \times \PP(V_2)}(\cO(-1,0) \oplus \cO(0,-1)).
\end{equation*}
It is isomorphic to the blowup of $\PP(V_1 \oplus V_2)$ along the union of two skew planes $\PP(V_1) \sqcup \PP(V_2) \subset \PP(V_1 \oplus V_2)$
with the exceptional divisors
\begin{equation*}
E_1 = \PP_{\PP(V_1) \times \PP(V_2)}(\cO(-1,0)) \subset \widetilde\bJ,
\qquad 
E_2 = \PP_{\PP(V_1) \times \PP(V_2)}(\cO(0,-1)) \subset \widetilde\bJ.
\end{equation*}
Denote by $\tH_1$ and $\tH_2$ the pullbacks to $\widetilde\bJ$ of the hyperplane classes of the two factors~$\PP(V_1)$ and~$\PP(V_2)$,
and by $\tH$ the Grothendieck relative class of the projectivization.
Then $E_1 \equiv \tH - \tH_2$ and $E_2 \equiv \tH - \tH_1$.

Consider the involution of the bundle $\cO(-1,0) \oplus \cO(0,-1)$ acting with weight $-1$ on the first summand and with weight $1$ on the second,
and the corresponding involution $\tau$ of $\widetilde\bJ$.
The fixed locus of $\tau$ is the union of the exceptional divisors $E_1 \sqcup E_2$, 
and the quotient $\widetilde\bJ/\tau$ is isomorphic to $\bJ$ with the quotient map~$f \colon \widetilde\bJ \to \bJ$ induced by the projection
\begin{equation*}
\Sym^2(\cO(-1,0) \oplus \cO(0,-1)) = 
\cO(-2,0) \oplus \cO(-1,-1) \oplus \cO(0,-2)
\twoheadrightarrow \cO(-2,0) \oplus \cO(0,-2).
\end{equation*}
Note also, that $\cO_{\widetilde\bJ}(2\tH) \cong f^*(\cO_\bJ(H))$,
and this induces an isomorphism 
\begin{equation*}
H^0(\widetilde\bJ, \cO_{\widetilde\bJ}(2\tH))^\tau \cong
H^0(\bJ, \cO_\bJ(H)) \cong \Sym^2V_1^\vee \oplus \Sym^2V_2^\vee
\end{equation*}
between the space of $\tau$-invariant global sections of $\cO_{\widetilde\bJ}(2\tH)$ 
and the space of global sections of $\cO_\bJ(H)$.
Therefore, the preimage 
\begin{equation*}
\tS := f^{-1}(S) \subset \widetilde\bJ
\end{equation*}
is the zero locus of a general $\tau$-invariant section of the vector bundle $\cO_\bJ(2\tH)^{\oplus 3}$.
We have
\begin{equation*}
K_\tS 
\equiv K_{\widetilde\bJ} + 6\tH
\equiv (- 3\tH_1 - 3\tH_2) + (\tH_1 + \tH_2 - 2\tH) + 6\tH 
\equiv 4\tH -2\tH_1 - 2\tH_2
\equiv 2E_1 + 2E_2.
\end{equation*}

Recall that $S$ is defined by a map $\phi \colon W \to \Sym^2V_1^\vee \oplus \Sym^2V_2^\vee$.
Clearly $\tS \cap E_i$ is equal to the intersection of three conics in $\PP(V_i)$ 
corresponding to the induced map $\phi_i \colon W \to \Sym^2V_i^\vee$,
hence is empty for a general choice of~$S$.
This shows that for a general $S$, the surface $\tS$ meets neither $E_1$ nor $E_2$, hence $K_\tS \equiv 0$.

Furthermore, it is easy to see that~$H^{1}(\tS,\cO_\tS) = 0$ (for instance, by using the Koszul resolution of $\cO_\tS$ on $\widetilde\bJ$), 
hence $\tS$ is a K3 surface.
As $\tS$ does not intersect the fixed locus $E_1 \sqcup E_2$ of $\tau$, the involution $\tau$ acts freely on $\tS$, hence
\begin{equation*}
S \cong \tS/\tau \subset \widetilde\bJ/\tau = \bJ
\end{equation*}
is an Enriques surface.

Finally, note that the surface $\tS$ defined above coincides with the surface $X$ in~\cite[Exercise~VIII.18]{B-book},
and the involution $\tau$ on $\tS$ coincides with the involution $\sigma$ in \emph{loc.\ cit.}
Therefore, the quotient $S = \tS/\tau$ is a general Enriques surface.
\end{proof}

Next we consider the product $\bJ \times \PP(W)$ 
that parametrizes the linear system of sections of $\cO_\bJ(H)$, cutting out $S$ in $\bJ$.
Denote by $H'$ the hyperplane class of this $\PP(W)$ and let 
\begin{equation*}
X \subset \bJ \times \PP(W)
\end{equation*}
be the universal divisor from the linear system, i.e., the zero locus on $\bJ \times \PP(W)$
of the global section of the line bundle $\cO_\bJ(H) \boxtimes \cO(H')$ corresponding to the map~$\phi$.

\begin{theorem}
\label{theorem:general}
The variety $X$ is a Fano $6$-fold with a semiorthogonal decomposition 
\begin{equation*}
\bD(X) = \langle \bD(S), F_1, \dots, F_{36} \rangle,
\end{equation*}
where $F_1,\dots,F_{36}$ are exceptional bundles.
The Hodge diamond of $X$ is diagonal, but $K_0(X)$ contains a~$2$-torsion class;
in particular $\bD(X)$ does not have a full exceptional collection.
\end{theorem}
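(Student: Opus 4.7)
The plan is to recognize $X$ as the Cayley hypersurface associated with the rank-$3$ bundle $\cE := W^\vee \otimes \cO_\bJ(H)$ on $\bJ$ and its section $\phi$, whose zero locus on $\bJ$ is $S$ (smooth of codimension $3$ by the preceding lemma). Identifying the Grothendieck projectivization $\PP_\bJ(\cE^\vee) = \PP_\bJ(W \otimes \cO_\bJ(-H)) \cong \bJ \times \PP(W)$ via the twist $\cO_\bJ(-H)$, the tautological line bundle $\cO_{\PP(\cE^\vee)}(1)$ corresponds to $\cO_\bJ(H) \boxtimes \cO_{\PP(W)}(H')$, and the section of this line bundle coming from $\phi$ via the composition $\cO \xrightarrow{\pi^*\phi} \pi^*\cE \twoheadrightarrow \cO_{\PP(\cE^\vee)}(1)$ cuts out precisely $X$.

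First I would verify that $X$ is a smooth Fano $6$-fold. Smoothness for generic $\phi$ follows from a Bertini-type argument; the delicate point is the locus $\pi_\bJ^{-1}(S) \subset X$ where the projection $\pi_\bJ \colon X \to \bJ$ has $2$-dimensional rather than $1$-dimensional fibers, and there a local computation in coordinates adapted to the regular zero locus of $\phi$ completes the check. Adjunction together with the projective bundle formula applied to $\omega_\bJ$ expresses $-K_X$ as the restriction of a positive combination of $H$, $H_1$, $H_2$, and $H'$; ampleness then reduces to positivity on the extremal rays of the Mori cone of $\bJ$ (namely the fiber of the projective bundle, and lines in the $\PP^2$-fibers of the two sections $\PP_{\PP(V_1) \times \PP(V_2)}(\cO(-2,0))$ and $\PP_{\PP(V_1) \times \PP(V_2)}(\cO(0,-2))$ that get contracted onto the Veronese surfaces) combined with the obvious ampleness of $H'$ on $\PP(W)$.

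The heart of the argument is the Cayley trick, a consequence of Orlov's formulas for projective bundles and blowups (in a form that is by now standard, cf.\ Kuznetsov's work on quadric fibrations and its extensions): for a rank-$r$ bundle $\cE$ on smooth $Y$ with regular zero locus $Z \subset Y$ of codimension $r$, the associated Cayley hypersurface admits a semiorthogonal decomposition
\begin{equation*}
\bD(X) = \langle \bD(Z),\ \pi^*\bD(Y),\ \pi^*\bD(Y)(1),\ \ldots,\ \pi^*\bD(Y)(r-2) \rangle,
\end{equation*}
where the twists are by powers of the tautological line bundle $\cO_{\PP(\cE^\vee)}(1)|_X$. Applying this with $r = 3$, $Y = \bJ$, $Z = S$, we obtain
\begin{equation*}
\bD(X) = \langle \bD(S),\ \pi_\bJ^*\bD(\bJ),\ \pi_\bJ^*\bD(\bJ) \otimes \cO_X(H + H') \rangle.
\end{equation*}
Since $\bJ$ is a $\PP^1$-bundle over $\PP^2 \times \PP^2$, combining Orlov's projective bundle formula with Beilinson's exceptional collection gives $\bD(\bJ)$ a full exceptional collection of $2 \cdot 9 = 18$ line bundles, so the two copies above provide the claimed $36$ exceptional line bundles $F_1, \dots, F_{36}$ on $X$.

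The remaining assertions follow formally. Additivity of Hochschild homology under semiorthogonal decompositions, together with the Hochschild--Kostant--Rosenberg isomorphism, yields $HH_k(X) = HH_k(S) + 2\, HH_k(\bJ)$; since both $S$ and $\bJ$ have diagonal Hodge diamonds, $HH_k(X) = 0$ for $k \ne 0$, and hence the Hodge diamond of $X$ is diagonal. Additivity of $K_0$ gives $K_0(X) = K_0(S) \oplus \ZZ^{36}$, so the $2$-torsion class in $K_0(S)$ produces a $2$-torsion class in $K_0(X)$, and Lemma~\ref{lemma:no-ec} rules out a full exceptional collection. The main obstacle I expect is the smoothness verification at the jumping locus $\pi_\bJ^{-1}(S)$, and checking that $\phi$ is sufficiently generic for all hypotheses of the Cayley trick to hold simultaneously; the rest is bookkeeping.
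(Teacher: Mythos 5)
Your proposal is correct and follows essentially the same route as the paper: the semiorthogonal decomposition is obtained from the Cayley trick applied to the universal divisor of the linear system cutting out $S$ (the paper cites \cite[Theorem~8.8]{K-hpd} or \cite[Proposition~2.10]{O} for exactly this), and the remaining claims follow from additivity of Hochschild homology and of $K_0$. The only cosmetic differences are that the paper checks ampleness of $-K_X$ by reducing to ampleness of $H+H_1+H_2$ on $\bJ$ via Hartshorne's pushforward criterion rather than by intersecting with the extremal rays of the Mori cone, and it tabulates the Hodge diamond directly rather than invoking HKR.
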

\begin{proof}
The canonical class of $X$ is equal to
\begin{equation*}
K_X 
= K_\bJ + K_{\PP(W)} + (H + H')
= (- 3H_1 - 3H_2) + (2H_1 + 2H_2 - 2H) - 3H' + (H + H') 
= - H_1 - H_2 - H - 2H'.
\end{equation*}
Let us show that $-K_X$ is ample.
Clearly, for this it is enough to check that $H + H_1 + H_2$ is ample on $\bJ$.
By~\cite[Proposition~3.2]{H66} this is equivalent to ampleness 
of its pushforward $\cO(3H_1 + H_2) \oplus \cO(H_1 + 3H_2)$ on $\PP(V_1) \times \PP(V_2)$, 
which follows from~\cite[Proposition~2.2]{H66} and from ampleness of the summands.
We conclude that~$X$ is a Fano 6-fold.

The map $X \to \bJ$ has general fiber $\PP^1$, and over the surface $S \subset \bJ$ the fibers jump to $\PP^2$.
Therefore,
\begin{equation*}
\bD(X) = \langle \bD(S), \bD(\bJ), \bD(\bJ) \rangle,
\end{equation*}
either by~\cite[Theorem~8.8]{K-hpd}, or by~\cite[Proposition~2.10]{O}.
Since $\bJ$ is a $\PP^1$-bundle over $\PP^2\times \PP^2$, its derived category is generated by $3\cdot 3\cdot 2 = 18$ exceptional bundles,
hence we obtain the required semiorthogonal decomposition for $\bD(X)$.
Finally, the Hodge diamond of $X$ looks like
\begin{equation*}
\begin{smallmatrix}
&&&&&& 1 \\
&&&&& 0 && 0 \\
&&&& 0 && 4 && 0 \\
&&& 0 && 0 && 0 && 0 \\
&& 0 && 0 && 8 && 0 && 0\\
& 0 && 0 && 0 && 0 && 0 && 0 \\
0 && 0 && 0 && 10 && 0 && 0 && 0\\
& 0 && 0 && 0 && 0 && 0 && 0 \\
&& 0 && 0 && 8 && 0 && 0\\
&&& 0 && 0 && 0 && 0 \\
&&&& 0 && 4 && 0 \\
&&&&& 0 && 0 \\
&&&&&& 1 
\end{smallmatrix}
\ +\ 
\begin{smallmatrix}
&& 1 \\
& 0 && 0 \\
0 && 10 && 0\\
& 0 && 0 \\
&& 1 
\end{smallmatrix}
\ =\ 
\begin{smallmatrix}
&&&&&& 1 \\
&&&&& 0 && 0 \\
&&&& 0 && 4 && 0 \\
&&& 0 && 0 && 0 && 0 \\
&& 0 && 0 && 9 && 0 && 0\\
& 0 && 0 && 0 && 0 && 0 && 0 \\
0 && 0 && 0 && 20 && 0 && 0 && 0\\
& 0 && 0 && 0 && 0 && 0 && 0 \\
&& 0 && 0 && 9 && 0 && 0\\
&&& 0 && 0 && 0 && 0 \\
&&&& 0 && 4 && 0 \\
&&&&& 0 && 0 \\
&&&&&& 1 
\end{smallmatrix},
\end{equation*}
a combination of the Hodge diamonds of $\bJ\times\PP^1$ and $S$.
The Grothendieck group is additive with respect to semiorthogonal decompositions, hence
\begin{equation*}
K_0(X) = K_0(S) \oplus \ZZ^{36};
\end{equation*}
in particular the 2-torsion class in $S$ gives a 2-torsion class in $X$.
We conclude by Lemma~\ref{lemma:no-ec}.
\end{proof}

\begin{remark}
The embedding of the derived category of a general Enriques surface into a Fano variety, constructed in Theorem~\ref{theorem:general},
solves for them so-called ``Fano-visitor problem'' suggested by Alexey Bondal in 2011, see~\cite{BBF,KKLL,KL15,N,KF}. 
Note that a similar embedding of $\bD(S)$ into the derived category of a Fano \emph{orbifold} was constructed in~\cite[6.2.3]{KL15}.
\end{remark}


\end{document}